\newtheorem{theorem}{Theorem}[section]
\newtheorem{thm}{Theorem}[section]
\newtheorem{proposition}[theorem]{Proposition}
\newtheorem{corollary}[theorem]{Corollary}
\newtheorem{openproblem}{Open problem}
\theoremstyle{definition}
 \newtheorem{definition}[theorem]{Definition}
\newtheorem*{definition*}{Definition}
\theoremstyle{remark}
\newtheorem{remark}[theorem]{Remark}
\numberwithin{equation}{section}
\newcommand{\norm}[1]{\left\lVert#1\right\rVert}
\newcommand\minus\backslash
\newcommand\lan\langle
\newcommand\ran\rangle
\renewcommand\leq\leqslant
\renewcommand\geq\geqslant
\newlength{\intwidth}
\newcommand{\Z}{\mathbb{Z}}
\title{Towards a Fluid computer}
\author{Robert Cardona}
\address{Robert Cardona,
Departament de Matemàtiques i Informàtica, Universitat de Barcelona, Gran Via de les Corts Catalanes 585, 08007 Barcelona, Spain \newline \it{e-mail: robert.cardona@ub.edu }
 }
 \author{Eva Miranda}\address{ Eva Miranda,
Laboratory of Geometry and Dynamical Systems $\&$ Institut de Matem\`atiques de la UPC-BarcelonaTech (IMTech),  Universitat Polit\`{e}cnica de Catalunya,  Avinguda del Doctor Mara\~{n}on 44-50, 08028, Barcelona  \\  CRM Centre de Recerca Matem\`{a}tica, Campus de Bellaterra
Edifici C, 08193 Bellaterra, Barcelona
 \it{e-mail: eva.miranda@upc.edu }
 }
\author{Daniel Peralta-Salas} \address{Daniel Peralta-Salas, Instituto de Ciencias Matem\'aticas, Consejo Superior de Investigaciones Cient\'ificas,
28049 Madrid, Spain. \it{e-mail: dperalta@icmat.es} }
 \thanks{Robert Cardona and Eva Miranda were partially supported by the project reference PID2019-103849GB-I00 by MICIU/AEI/10.13039/501100011033/ and the AGAUR grant 2021 SGR 00603.  Eva Miranda is supported by an ICREA Academia Prize 2021 and Bessel Prize of the Alexander Von Humboldt foundation and by the Spanish State Research Agency, through the Severo Ochoa and Mar\'{\i}a de Maeztu Program for Centers and Units
of Excellence in R\&D (project CEX2020-001084-M). Daniel Peralta-Salas is supported by the grants CEX2023-001347-S, RED2022-134301-T and PID2022-136795NB-I00 funded by MCIN/AEI/10.13039/501100011033. All authors are supported by the project Computational,
dynamical and geometrical complexity in fluid dynamics -  AYUDAS FUNDACI\'ON
BBVA A PROYECTOS INVESTIGACI\'ON CIENT\'IFICA 2021.}
\begin{document}
\begin{abstract}

 In 1991, Moore \cite{Mo}  raised a question about whether hydrodynamics is capable of performing computations. Similarly, in 2016, Tao \cite{T0} asked whether a mechanical system, including a fluid flow, can simulate a universal Turing machine. In this expository article, we review the construction in~\cite{CMPP}  of a ``Fluid computer'' in dimension 3 that combines techniques in symbolic dynamics with the connection between steady Euler flows and contact geometry unveiled by Etnyre and Ghrist. In addition, we argue that the metric that renders the vector field Beltrami cannot be critical in the Chern-Hamilton sense~\cite{CH}. We also sketch the completely different construction for the Euclidean metric in $\mathbb{R}^3$ as given in~\cite{CMP2}. These results reveal the existence of undecidable fluid particle paths. We conclude the article with a list of open problems.



\end{abstract}

\maketitle

\section{Introduction}\label{S:intro}

In the realm of decision problems, the halting problem stands as a cornerstone. Turing showed in \cite{Turing} that it is impossible to create a universal algorithm capable of deciding whether any given computer program will stop running or continue indefinitely. This breakthrough marked a significant moment in history of logic and mathematics and led to an important computation model: the Turing machine.

The undecidability of the halting problem raised natural questions: \emph{Which physical models are capable of universal computation and thus exhibit undecidable phenomena?} \emph{What types of physics might transcend computation?} (Penrose~\cite{penrose}) \emph{Can hydrodynamics perform computations?} (Moore~\cite{Mo}). Examples of physical problems where this kind of complexity has been shown to arise include 3D billiards \cite{Mo}, 3D optical systems~\cite{RTY}, neural networks~\cite{S95} or the spectral gap in quantum many-body physics ~\cite{perez}. In a series of works we addressed the following related question posed by Moore \cite{Mo} and revisited by Tao \cite{TNat, T0, T1}: \emph{Is hydrodynamics capable of universal computation?}

Fundamental to the understanding of physical phenomena and dynamical systems in general is the study of the computational complexity that may arise in a given class of systems. This complexity can include undecidable phenomena (implied, for instance, by the capacity of the system to perform universal computation) or computational intractability, which is relevant not only from a purely theoretical point of view but also in terms of applications to developing algorithms to determine the long-term behavior of a given physical system. This complexity is often captured by the existence of undecidable (or computationally intractable) trajectories: there exist explicitly computable initial conditions and open sets of phase space for which determining if the trajectory will intersect that open set can be undecidable or arbitrarily complex from an algorithmic point of view. 

In this survey article, we present some recent results on the connections between Turing machines and the equations of hydrodynamics. In~\cite{CMPP} we constructed stationary solutions of the Euler equations on a Riemannian 3-dimensional sphere that can simulate any Turing machine (i.e., they are Turing complete). In particular, these solutions exhibit undecidable paths. Using different techniques, specifically the Cauchy-Kovalevskaya theorem and the theory of gradient dynamical systems, in~\cite{CMP2} we constructed Turing complete stationary solutions to the Euler equations in Euclidean space. The drawback of these solutions is that they have infinite energy, so they correspond to non-physical fluid flows.

This article is organized as follows: In Section~\ref{S.TM} we introduce the definition of Turing machines and Turing complete dynamical systems. In Section~\ref{S.Euler} we introduce the Euler equations, their stationary solutions, and their connection to contact geometry. The ideas of our constructions of Turing complete Euler flows in~\cite{CMPP} and~\cite{CMP2} are sketched in Sections~\ref{S.Turing} and~\ref{S.introEuler}, respectively. We also highlight a new consequence of the construction in \cite{CMPP}: the Turing completeness of this construction implies that the metric which makes the vector field Beltrami cannot be critical for the Chern-Hamilton energy functional (Proposition \ref{prop:nocritical}). Finally, in Section~\ref{S.open} we conclude with some appealing open problems. 


\section{Undecidability and Turing machines}\label{S.TM}

Undecidable phenomena in mathematics are usually established by reducing a problem to another one which is well-known to be undecidable. We describe here Turing machines and the notion of Turing completeness of a dynamical system, which relates certain properties of the orbits of the system with the halting problem for Turing machines.

\subsection{Turing machines}
A Turing machine is defined as $T=(Q,q_0,q_{halt},\Sigma,\delta)$, where $Q$ is a finite set of states, from which we label an initial state $q_0$ and a (different) halting state $q_{halt}$, a finite set $\Sigma$ called the alphabet of symbols from which we label a blank symbol that we denote by a zero, and a transition function
$$\delta:Q\times \Sigma \longrightarrow Q\times \Sigma \times \{-1,0,1\}.$$
 A configuration of the machine is an element in $Q\times \mathcal{A}$, where $\mathcal{A}$ denotes the countable set of infinite sequences in $\Sigma^\mathbb{Z}$ that have only finitely many non-blank symbols. An input (or starting configuration) of the machine is a configuration of the form $(q_0,t)$, where $q_0$ is the initial state and $t=(t_i)_{i\in \mathbb{Z}}$ is a sequence in $\mathcal{A}$ (referred to as the ``tape" of the machine)

The algorithm follows the rules below. Let $(q,t)\in Q\times \mathcal{A}$ be the configuration at a given step of the algorithm, for instance at the first step it will be an initial configuration.
\begin{enumerate}
 \item[1.] If the current state is $q_{halt}$ then \emph{halt the algorithm} and return $t$ as output.
 \item[2.] Otherwise, compute $\delta(q,t_0)=(q',t_0',\varepsilon)$, where $t_0$ denotes the symbol in position zero of $t$. Let $t'$ be the tape obtained by replacing $t_0$ with $t_0'$ in $t$, and shifting by $\varepsilon$ (by convention $+1$ is a left shift and $-1$ is a right shift). The new configuration is $(q',t')$, and we can go back to step $1$.
\end{enumerate}
The set $\mathcal{P}:=Q\times \mathcal{A}$ is the set of configurations, and the algorithm determines a  global transition function, which is the dynamical interpretation of the machine, given by
$$\Delta: Q\setminus \{q_{halt} \} \times \mathcal{A} \rightarrow \mathcal{P},$$
that sends a configuration to the configuration obtained after applying one step of the algorithm.\\

The so-called halting is (computationally) undecidable, as shown by Alan Turing in $1936$. There is no algorithm that, given any Turing machine $T$ and an initial configuration $c$, can tell in finite time whether $T$ reaches a configuration whose state is the halting state or not. This implies that for some pairs $(T,c)$, the statement ``$T$ halts with $c$" can be true/false but unprovable, i.e., undecidable in the sense of G\"odel.

Another important fact that we will need is the existence of universal Turing machines, which can loosely be defined as follows.
\begin{definition}
A Turing machine $T_U$ is universal if the following property holds. Given any other Turing machine $T$ and an initial configuration $c$ of $T$ there exists a (computable) initial configuration $c_U(T,c)$ of $T_U$, that depends on $T$ and its initial configuration, such that $T$ halts with $c$ if and only if $T_U$ halts with $c_U$.
\end{definition}

As a consequence of the undecidability of the halting problem, a universal Turing machine has inputs for which the halting problem is true/false but unprovable.

\subsection{Turing complete dynamical systems}\label{ss:TC}

In this section we want to formalize what it means for a dynamical system to be able to ``simulate" a universal Turing machine, i.e., to be capable of universal computation. In the next definition the dynamical system $X$ can be discrete or continuous defined on some topological space $M$.

\begin{definition}\label{TC}
A dynamical system $X$ on $M$ is Turing complete if there exists a universal Turing machine $T_U$ such that for each initial configuration $c$ of $T_U$, there exists a (computable) point $p_c \in M$ and a (computable) open set $U_c\subset M$ such that $T_U$ halts with input $c$ if and only if the positive trajectory of $X$ through $p_c$ intersects $U_c$.
\end{definition}

In this case, the halting of a given configuration is equivalent to a certain property of the orbit of $X$ through an explicit point in $M$. It is completely essential that $p_c$ and $U_c$ are in some sense computable. If we are working on a manifold $M$, a point $p_c$ is computable (in terms of $c$) if in some chart the coordinates of $p_c$ can be exactly computed in finite time (in terms of $c$), for instance having explicit rational coordinates. Computability of an open set $U_c$ can be loosely defined as saying that one can explicitly approximate $U_c$ with any given precision. It is clear that a Turing complete dynamical system possesses undecidable trajectories: the long-term behavior of an orbit is undecidable in general.

Even if this is not explicitly stated in the definition of Turing completeness, there is a general approach to constructing them. We first define an ``encoding" function that assigns to each configuration $(q,t)$ of a certain universal Turing machine $T_U$ a set $U_{(q,t)} \subset M$. We map the initial configurations $(q_0,t^{in})$ to points $p_{(q_0,t^{in})}$, and every other configuration as a point or an open set. We then require that for each initial configuration $(q_0,t^{in})$, the trajectory of $X$ through $p_{(q_0,t^{in})}$ sequentially intersects the points or sets corresponding to the configurations obtained by iterating the Turing machine starting with $(q_0,t^{in})$. Namely, the trajectory through $(q_0,t^{in})$ will first intersect the set that encodes $\Delta(q_0,t^{in})$, then the set that encodes $\Delta^2(q_0,t^{in})$ and so on, without intersecting any other coding set in between. With this property, we can consider the open set $U$ obtained as the union of all the open sets $U_{(q_{halt},t)}$ for each $t\in \mathcal{A}$, and the trajectory through $p_{(q_0,t^{in})}$ will intersect $U$ if and only if the machine $T_U$ halts with initial configuration $(q_0,t^{in})$.

\section{Ideal fluids in equilibrium}\label{S.Euler}

The motion of an ideal fluid, which is a fluid that is incompressible and has no viscosity, on a Riemannian manifold $(M,g)$ is described by the Euler equations
\begin{equation*}
\begin{cases}
\frac{\partial}{\partial t} X + \nabla_X X &= -\nabla p\,, \\
\operatorname{div}X=0\,.
\end{cases}
\end{equation*}
Here $p$ is a scalar function called the pressure and $X$ is the velocity field of the fluid. The term $\nabla_X X$ denotes the covariant derivative of $X$ along $X$, and in general, the velocity field $X$ depends on time. A solution to the Euler equations is called stationary whenever $X$ does not depend on time: it models an ideal fluid in equilibrium. Introducing the dual one-form $\alpha=g(X,\cdot)$, the stationary Euler equations can be rewritten as 
\begin{equation*}
\begin{cases}
\iota_Xd\alpha=-dB\,, \\
d\iota_X\mu=0\,,
\end{cases}
\end{equation*}
where $B=p+\frac{1}{2}\norm{X}$ is the Bernoulli function, and $\mu$ is the Riemannian volume form. There is a subclass of stationary solutions, to which our constructions belong, which is the set of Beltrami fields. These are (volume-preserving) vector fields parallel to their curl, which is the only vector field $Y$ such that $\iota_Y\mu=d\alpha$. These correspond to solutions for which the Bernoulli function is constant. \\

Given a Riemannian manifold $(M,g)$, constructing a stationary solution with some desired dynamical property requires finding a solution to the PDE, which might be a rather difficult task. We will do this in the second construction that we describe in Section \ref{S.introEuler}. Another approach, which allows for more flexibility, requires the notion of Eulerisable flow.

\begin{definition}
A vector field $X$ on a manifold $M$ is Eulerisable if there exists an ambient metric $g$ for which $X$ is a stationary solution to the Euler equations in $(M,g)$.
\end{definition}
Hence, if one wants to show that a vector field with specific dynamics models a fluid in equilibrium, there is more room to prove it if we do not fix the metric for which we want the vector field to solve the Euler equations. We will use this approach in the first construction, which we describe in Section \ref{S.Turing}.

There is an important class of vector fields which are Eulerisable: Reeb fields in contact geometry. Recall that a one-form $\alpha$ on a three-dimensional manifold $M$ is a contact form if $\alpha \wedge \alpha >0$ everywhere. The kernel of $\alpha$ is a plane distribution $\xi$ called a contact structure. Any other one-form $f\alpha$ with $f$ a positive function in $M$ is another contact form defining $\xi$ as well. The  Reeb vector field $R$ associated to a given contact form $\alpha$ is determined by the equations
\begin{equation} \label{eq:Reeb}
\iota_R\alpha=1\,, \qquad \iota_Rd\alpha =0\,.
\end{equation}
The flow of $R$ preserves the contact form and thus is a volume-preserving vector field. The same holds for any positive reparametrization $hR$ of $R$, which preserves the volume form $\frac{1}{h}\alpha\wedge d\alpha$. We call a vector field that is a positive multiple of a Reeb field a Reeb-like vector field. The main connection between Reeb-like fields and the Euler equations was established by Etnyre and Ghrist \cite{EG}. For the statement, we recall that a Beltrami field $X$ is rotational if its curl is a non-zero multiple of $X$.

\begin{thm}\label{thm:EGcorres}
Let $M$ be a $3$-dimensional manifold. Any smooth, nonsingular rotational
Beltrami field on $M$ is a Reeb-like field for some contact form on $M$. Conversely, given a
contact form $\alpha$ on $M$ with Reeb field $X$, any nonzero rescaling of $X$ is a smooth, nonsingular
rotational Beltrami field for some Riemannian metric on $M$.
\end{thm}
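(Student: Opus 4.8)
The plan is to run the classical argument of Etnyre and Ghrist, organized around the correspondence $X\leftrightarrow\alpha:=g(X,\cdot)$, the defining property of the curl $\iota_{\curl X}\mu=d\alpha$ (with $\mu=\mu_g$), and the contraction identity $\alpha\wedge\iota_X\mu=(\iota_X\alpha)\,\mu=g(X,X)\,\mu$, which holds on a $3$-manifold because $\alpha\wedge\mu\equiv 0$ for degree reasons.

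For the forward direction I would start from a smooth nonsingular rotational Beltrami field $X$ on $(M,g)$, so $\operatorname{div}_gX=0$ and $\curl X=fX$ with $f$ nowhere zero, and put $\alpha=g(X,\cdot)$. From $d\alpha=\iota_{fX}\mu=f\,\iota_X\mu$ together with the contraction identity one gets $\alpha\wedge d\alpha=f\,g(X,X)\,\mu$, a nowhere-zero multiple of the volume form since $X$ is nonsingular; after possibly reversing the orientation of $M$ on the clopen set $\{f<0\}$ this exhibits $\alpha$ as a contact form. Contracting $d\alpha=f\,\iota_X\mu$ with $X$ gives $\iota_Xd\alpha=0$, while $\iota_X\alpha=g(X,X)>0$; since $\alpha$ is contact, $d\alpha$ has rank $2$ and its kernel line is spanned by $X$, so the Reeb field of $\alpha$ (the unique generator $R$ of $\ker d\alpha$ with $\iota_R\alpha=1$) equals $X/g(X,X)$. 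Hence $X=g(X,X)\,R$ is a positive reparametrization of a Reeb field, i.e. Reeb-like.

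For the converse I would take a contact form $\alpha$ with Reeb field $R$ and contact structure $\xi=\ker\alpha$, and a rescaling $X=hR$ with $h$ nowhere zero; replacing $\alpha$ by $-\alpha$ on the components where $h<0$ (still a contact form for the same orientation, with Reeb field $-R$), I may assume $h>0$. The guiding observation is that $X$ already preserves the volume form $\mu:=\frac1h\,\alpha\wedge d\alpha$, so this must be the volume form of the sought metric, and one wants $g(X,\cdot)=\alpha$ so that $d(g(X,\cdot))=d\alpha=\iota_X\mu$, i.e. $\curl X=X$. To realize both conditions simultaneously I would define $g$ by: $X\perp_g\xi$; $g(X,X):=h$ (forced, since $\alpha(X)=h$); and $g|_\xi$ any metric on the rank-$2$ bundle $\xi$ whose area form is $\frac{1}{\sqrt h}\,d\alpha|_\xi$ — such a metric exists because $d\alpha|_\xi$ is nowhere degenerate, hence is a positive conformal multiple of the area form of any auxiliary metric on $\xi$ (which is orientable, being oriented by $d\alpha|_\xi$). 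A short frame computation confirms $\mu_g=\frac1h\,\alpha\wedge d\alpha$, so $\operatorname{div}_gX=0$; and $g(X,\cdot)=\alpha$ yields $\curl X=X$. Thus $X$ is a smooth nonsingular rotational Beltrami field for $g$, with constant (nonzero) proportionality factor $1$.

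I expect the forward direction to be purely computational once the contraction identity is in place — the only subtlety being the orientation bookkeeping when $f$ changes sign between components — and the substantive part to be the converse: recognizing that $hR$ canonically preserves $\frac1h\alpha\wedge d\alpha$, which rigidly pins down the volume form, and then checking that the remaining freedom (the $g$-length of $X$ and the pointwise conformal class of $g|_\xi$) is exactly enough to also force $g(X,\cdot)=\alpha$ while keeping $g$ a genuine, globally defined, smooth positive-definite metric assembled from the global objects $\alpha$, $d\alpha$, $h$ and an auxiliary metric.
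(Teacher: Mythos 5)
Your argument is correct, and it is essentially the standard Etnyre--Ghrist proof that the paper cites (the survey states Theorem~\ref{thm:EGcorres} without proof, referring to \cite{EG}): the forward direction via $\alpha=g(X,\cdot)$, $d\alpha=f\,\iota_X\mu$ and the identity $\alpha\wedge\iota_X\mu=g(X,X)\,\mu$, and the converse by pinning down $\mu_g=\tfrac1h\,\alpha\wedge d\alpha$ and assembling $g$ from $X\perp_g\ker\alpha$, $g(X,X)=h$, and a metric on $\xi$ with area form $\tfrac{1}{\sqrt h}\,d\alpha|_\xi$. The orientation bookkeeping on the clopen sets $\{f<0\}$, $\{h<0\}$ and the frame computation of $\mu_g$ are handled correctly, so I see no gap.
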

We will use this theorem in the next section to show that a certain Reeb vector field is a stationary solution to the Euler equations for some metric.

\section{Constructing stationary ideal fluids that are Turing complete (construction 1)}\label{S.Turing}

In this section we review the construction of a Turing complete stationary Euler flow on a Riemannian three-sphere \cite{CMPP}. In the first subsection we introduce Moore's theory of generalized shifts, which is used in the second subsection to construct Turing complete diffeomorphisms of the disk and Reeb flows. 

\subsection{Cantor sets, Turing machines and symbolic dynamics}

The simplest and most ubiquitous of fractals is the Cantor set, which can be drawn in one dimension. We can define it iteratively on an interval of a line: to do this, we divide the interval into three parts and remove the central part; then, we do the same with each of the two new resulting intervals, and so on. This is the so called ternary Cantor set, although we shall usually omit the adjective ``ternary'' in what follows.

But what do fractals have to do with Turing machines? It turns out that both are connected through the Cantor set. By its construction, this set is closely related to the development of a number in base 3. In his 1991 theory, Moore uses that development to encode the configurations of a Turing machine in the square Cantor set, the fractal set obtained by considering the product of two Cantor intervals. Thus, each configuration of the Turing machine corresponds to a unique point in the square Cantor set, and the evolution of the machine can be visualized as a movement between points in this set.

The main result of Moore's theory is that there exists a block transformation that maps the square Cantor set onto itself in such a way that every point associated with a Turing machine's configuration is sent to the point associated with the next configuration according to the algorithm. The details of this block transformation depend on the description of the machine, but we can always think of it as moving a finite number of blocks. In summary, Moore shows that the evolution of any algorithm can be understood as a dynamical system defined on the Cantor set.

These ideas introduced by Moore are key to our  proof in \cite{CMPP}. Building on them, we managed to construct a transformation on a disk (filling in the gaps left by Moore's block transformation to avoid discontinuities) that is capable of simulating a universal Turing machine.

The ideas above are embodied in the concept of generalized shift introduced by 
Moore \cite{Mo} in 1991 to  \emph{simulate any Turing machine}.

A generalized shift is a map that acts on the space of infinite sequences on a given finite alphabet. Specifically, let $A$ be an alphabet and, let $S\in A^\mathbb{Z}$ be an infinite sequence. A generalized shift $\phi:A^\Z \rightarrow A^\Z$ is specified by two maps $F$ and $G$ which depend on a finite number of specified sequence positions in $A^\Z$. Denote by $D_F= \{i,...,i+r-1\}$ and $D_G=\{j,...,j+l-1\}$ the sets of positions on which $F$ and $G$ depend, respectively. These functions indeed take a finite number of different values as they depend on a finite number of positions. The function \( G \) changes the sequence exclusively at the positions specified by \( D_G \):

\begin{align*}
G:A^l &\longrightarrow A^l \\
(s_{j}...s_{j+l-1}) &\longmapsto (s_{j}'...s_{j+l-1}')
\end{align*}
Here $s_j...s_{j+l-1}$ are the symbols at the positions $j,...,j+l-1$ of an infinite sequence $S\in A^\mathbb{Z}$.

On the other hand, the function $F$ assigns to the finite subsequence $(s_{i},\ldots,s_{i+r-1})$ of the infinite sequence $S\in A^\mathbb{Z}$ an integer:
\[ F : A^{r} \longrightarrow \mathbb{Z}. \]

The generalized shift $\phi : A^\mathbb{Z} \rightarrow A^\mathbb{Z}$ corresponding to $F$ and $G$ is defined as follows:
\begin{itemize}
\item Compute $F(S)$ and $G(S)$.
\item Modify $S$ by changing the positions in $D_G$ according to the function $G(S)$, obtaining a new sequence $S'$.
\item Shift $S'$ by $F(S)$ positions. In other words, we obtain a new sequence $s''_n = s'_{n+F(S)}$ for all $n\in \mathbb{Z}$.
\end{itemize}
The sequence $S''$ is then $\phi(S)$.
\newline

Given a Turing machine, there is a generalized shift $\phi$ conjugate to it. Conjugation means that there exists an injective map $\varphi: \mathcal{P} \rightarrow A^\mathbb{Z}$ such that the global transition function of the Turing machine is given by $\Delta = \varphi^{-1} \phi \varphi$. In fact, if the Turing machine is reversible, it can be shown that the generalized shift is bijective.

\subsubsection*{Generalized shifts and square Cantor sets}
 Generalized shifts are conjugate to maps of the \emph{square Cantor set} $C^2:=C\times C \subset I^2$, where $C$ is the (standard) \emph{Cantor ternary set} in the unit interval $I=[0,1]$.

 Without loss of generality, we can assume $A=\{0,1\}$. Given $s=(\ldots s_{-1}.s_0s_1\ldots)\in A^\mathbb{Z}$, we can associate to it an explicitly constructible point in the square Cantor set. In order to do this, it is enough to express the coordinates of the assigned point in base $3$. The coordinate $x$ corresponds to the expansion $(x_1,x_2,\ldots)$ in base $3$ where $x_i=0$ if $s_{-i}=0$ and $x_i=2$ if $s_{-i}=1$. Similarly, the coordinate $y$ corresponds to the expansion $(y_0,y_1,\ldots)$ where $y_i=0$ if $s_i=0$ and $y_i=2$ if $s_i=1$.

Moore proved that any generalized shift is conjugate to the restriction on the square Cantor set of a piecewise linear map defined on blocks of the Cantor set in $I^2$. This map consists of finitely many area-preserving linear components. If the generalized shift is bijective, then the image blocks are pairwise disjoint. An example is depicted in Figure \ref{fig:Cantormap}.
 Each linear component is the composition of two linear maps: a \emph{translation} and a positive (or negative) power of the \emph{horseshoe map} (or the Baker's map).

 \begin{figure}[!ht]\label{fig:Cantormap}
\begin{center}
\includegraphics[scale=0.3]{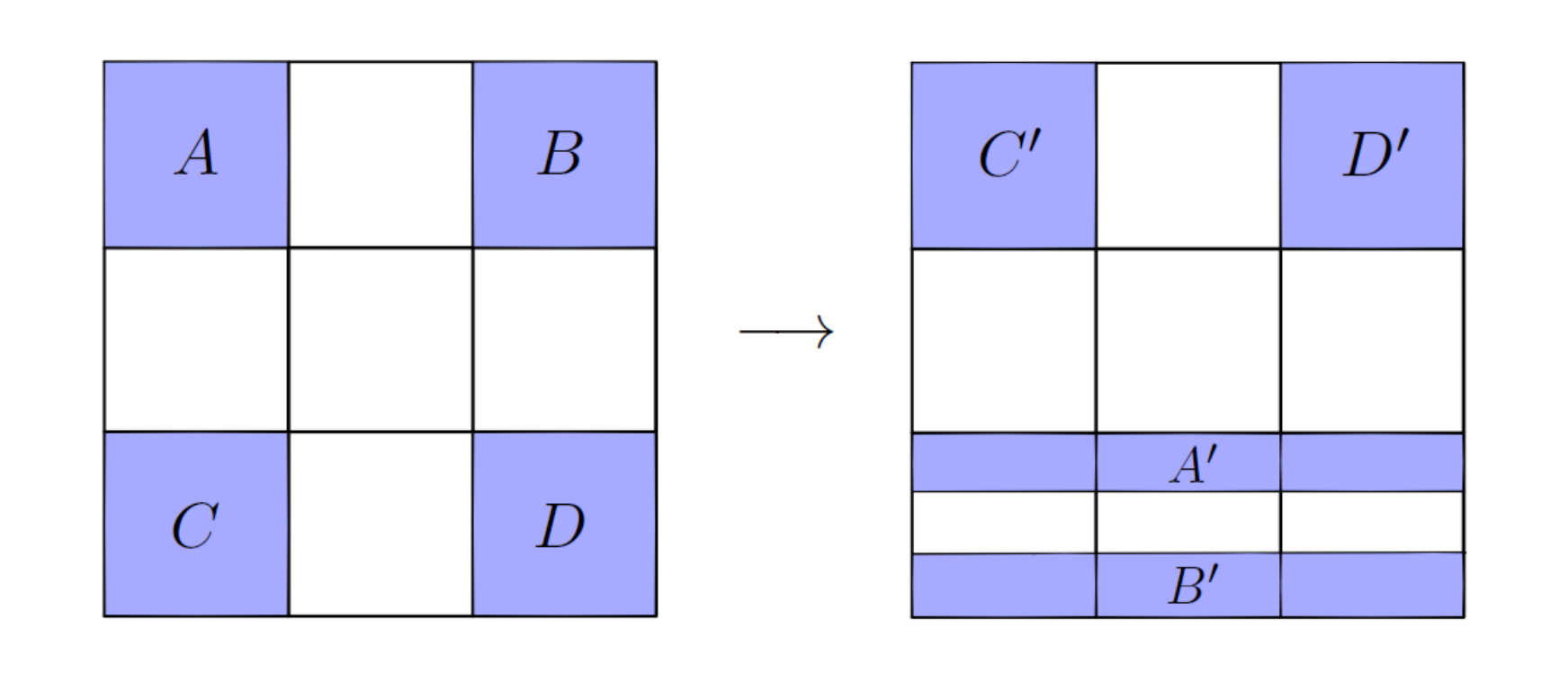}
\caption{Example of a map by blocks of the square Cantor set.}
\end{center}
\end{figure}

The figure above illustrates a mapping by blocks associated with a generalized shift $\phi:A 
\rightarrow A$, where the alphabet is $A=\{0,1\}$ and both $D_F$ and $D_G$ are defined as $\{-1,0\}$. The functions $F$ and $G$ are defined as follows:
$$G(0.1)=0.1,\enspace G(1.1)=0.0,\enspace G(0.0)=0.1,\enspace G(1.0)=1.1,$$ 
and $$F(0.1)=-1,\enspace F(1.1)=-1,\enspace F(0.0)=0,\enspace F(1.0)=0.$$ 
In this context, the Cantor blocks $A$, $B$, $C$, and $D$ represent sequences where positions $-1$ and $0$ respectively contain the pairs $(0.1)$, $(1.1)$, $(0.0)$, and $(1.0)$. Their respective images are denoted as $A'$, $B'$, $C'$, and $D'$.


\subsection{Area-preserving maps, Poincaré sections and Turing complete Reeb flows}

In \cite{CMPP}, and as suggested by Moore \cite{Mo}, we proved that any bijective generalized shift can be extended to an area-preserving diffeomorphism of the disk which is the identity near the boundary.

\begin{proposition}\label{prop:areaShift}
For each bijective generalized shift and its associated map of the square Cantor set $\phi$, there exists a $C^\infty$ area-preserving diffeomorphism of the disk $\varphi:D\rightarrow D$ which is the identity in a neighborhood of $\partial D$ and whose restriction to the square Cantor set is conjugate to $\phi$.
\end{proposition}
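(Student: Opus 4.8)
The plan is to build $\varphi$ piece by piece, starting from the block structure of $\phi$ provided by Moore's theorem and then interpolating smoothly in the complement of the square Cantor set. Recall that $\phi$ is the restriction to $C^2$ of a piecewise linear map whose components are, on finitely many source blocks $B_1,\dots,B_N \subset I^2$, compositions of a translation with a (positive or negative) power of the horseshoe/Baker's map; since $\phi$ is bijective, the image blocks $B_1',\dots,B_N'$ are pairwise disjoint. First I would enlarge each dyadic/triadic Cantor block $B_k$ slightly to an open rectangle $R_k \supset B_k$, chosen small enough that the $R_k$ remain pairwise disjoint and that the corresponding enlarged images $R_k'$ are also pairwise disjoint (this is possible because $C^2$ is totally disconnected and the original blocks are closed and disjoint). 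On each $R_k$ the map $\phi$ already extends to an affine area-preserving diffeomorphism onto $R_k'$ — an affine horseshoe power is linear on each rectangle of its domain partition, and a power of the Baker's map is only \emph{piecewise} linear, so here one must be slightly careful: I would instead realize each horseshoe power as the time-one map of a smooth, compactly-supported Hamiltonian isotopy on a slightly larger rectangle, which is automatically area-preserving and smooth (not merely piecewise linear). This is the standard device of replacing a linear shear/fold composition by a smooth symplectomorphism of $\RR^2$ that agrees with it on the relevant sub-rectangles.

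Next I would patch these local pieces into a global diffeomorphism of $D$. The idea: after composing with an ambient isotopy I may assume $D$ is large enough to contain $I^2$ together with all $R_k$ and $R_k'$ in its interior. I want a single area-preserving $\varphi: D \to D$, identity near $\partial D$, restricting to the prescribed affine/Hamiltonian map $R_k \to R_k'$ for each $k$. The obstruction to doing this directly is purely topological/area-theoretic: I need an area-preserving diffeomorphism of $D$ sending the disjoint union $\bigsqcup R_k$ to $\bigsqcup R_k'$ with the prescribed behavior on each piece, and such a map exists iff the complementary region $D \setminus \bigsqcup \overline{R_k}$ and $D \setminus \bigsqcup \overline{R_k'}$ have the same area and are diffeomorphic relative to boundary — which holds since total area is preserved ($\sum |R_k| = \sum |R_k'|$ by area-preservation of the blocks, and $|D|=|D|$) and both complements are planar domains with the same number and configuration of boundary circles. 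I would invoke the Moser trick / Dacorogna–Moser theorem on the complement to upgrade an ambient \emph{smooth} diffeomorphism extending the boundary data to an \emph{area-preserving} one, after first checking the smooth extension exists by an isotopy-extension argument (the $R_k$ can be moved to the $R_k'$ by a compactly supported isotopy of $\inte D$ since the plane is connected and we have matching orientations and boundary identifications).

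Finally, once $\varphi$ is constructed I would verify the two stated properties. That $\varphi$ is the identity near $\partial D$ is built in by taking all the isotopies compactly supported in $\inte D$. That $\varphi|_{C^2}$ is conjugate to $\phi$ follows because on each block $R_k \supset B_k$ the map $\varphi$ agrees exactly with the affine model of $\phi$, hence $\varphi(C^2 \cap B_k) = C^2 \cap B_k'$ and the action matches $\phi$ there; taking the union over $k$ (which exhausts $C^2$) shows $\varphi(C^2) = C^2$ and $\varphi|_{C^2} = \phi$ under the identification of $C^2$ with $A^\ZZ$, so conjugacy is in fact an equality (or a conjugacy by the identity).

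The main obstacle I expect is the smoothing step: Moore's model is only \emph{piecewise} linear (the Baker's map has a discontinuity in its derivative along the folding line), so one cannot simply extend the linear pieces — one must replace each horseshoe/Baker power by a genuinely smooth area-preserving map that agrees with it on the Cantor blocks, and then check that the finitely many such local replacements can be glued without creating new intersections among the (enlarged) image blocks. Controlling the enlargements $R_k$ so that both $\{R_k\}$ and $\{R_k'\}$ stay disjoint, while leaving enough room for the smooth folds, is the delicate bookkeeping; everything after that is a routine application of Moser's theorem on the complementary planar domain.
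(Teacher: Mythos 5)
Your proposal is correct and follows essentially the same route as the proof in \cite{CMPP} that this survey cites for Proposition~\ref{prop:areaShift}: extend the finitely many affine, determinant-one block components of Moore's Cantor-set map to pairwise disjoint rectangular neighborhoods (possible since the blocks and their images are disjoint compact sets), and then obtain a global area-preserving extension that is the identity near $\partial D$ by an isotopy-extension argument followed by a Moser/Dacorogna--Moser correction supported where the map is not yet area-preserving. The only inessential detour is your smoothing worry: on a sufficiently small neighborhood of each Cantor block the relevant power of the horseshoe/Baker map is a single affine map (the fold discontinuities separate distinct blocks), so no Hamiltonian regularization is needed at that stage, though it does no harm.
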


This proposition indicates that a Turing-complete bijective generalized shift, derived from a reversible universal Turing machine, leads to the existence of a Turing-complete area-preserving diffeomorphism of the disk as shown in \cite[Theorem 5.2]{CMPP}. Furthermore, we prove that this diffeomorphism can be identified as the first-return map on a disk-like Poincar\'e  section of a Reeb field.

\begin{thm}\label{thm:ReebDisk}
Let $(M,\xi)$ be a contact $3$-manifold, and $\varphi:D \rightarrow D$ be an area-preserving diffeomorphism of the disk which is the identity (in a neighborhood of) the boundary. Then there exists a contact form $\alpha$ whose associated Reeb vector field $R$ exhibits a Poincar\'e section with first return map conjugate to $\varphi$. In particular,  this implies the existence of a Turing-complete Reeb field  $R$ on $(M,\xi)$ in the sense defined in Definition~\ref{TC}.
\end{thm}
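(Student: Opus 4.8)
The plan is to promote an abstract area-preserving diffeomorphism $\varphi$ of the disk to the first-return map of a Reeb flow via an explicit \emph{mapping-torus / suspension} construction, and then reparametrize the resulting contact form so that it lives on the prescribed contact manifold $(M,\xi)$. First I would form the suspension: since $\varphi$ equals the identity near $\partial D$, the mapping torus $D\times[0,1]/(x,1)\sim(\varphi(x),0)$ glues smoothly into a solid torus $N=D\times S^1$, with the suspension vector field $\partial_t$ descending to a nonvanishing field $Y$ on $N$ whose first-return map on the section $D\times\{0\}$ is exactly $\varphi$. The key point is that $Y$ is the Reeb field of a natural contact form. Because $\varphi$ preserves the standard area form $\omega=dx\wedge dy$ on $D$ and is the identity near the boundary, $\omega$ is exact on the interior, $\omega=d\lambda$, and one can further arrange (using that $\varphi^*\lambda-\lambda$ is closed hence exact near the core, and a standard Moser-type averaging) a primitive $\lambda$ on $D$ together with a function so that the Thurston--Winkelnkemper form $\alpha=dt+\pi^*\lambda$ (suitably corrected across the gluing) is a well-defined contact form on $N$ with $\alpha\wedge d\alpha>0$ and Reeb field proportional to $\partial_t$. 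This is precisely the open-book/Thurston--Winkelnkemper mechanism, so the first-return map of $R_\alpha$ on the page $D$ is conjugate to $\varphi$.

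The second step is to embed this model into the given $(M,\xi)$. Here I would invoke the flexibility already built into the statement: we do not need $\xi$ on all of $M$ to come from an open book with page $D$; we only need a Darboux-type ball or, better, a neighborhood in $M$ contactomorphic to a neighborhood of the solid torus $N$ carrying a compatible contact structure. One clean route: pick any Reeb flow for $\xi$ on $M$, take a small flow box, and perform the insertion of the plug $N$ inside it, matching the contact form on the boundary collar (the contact condition is $C^0$-open and the collar matching is a relative Gray-stability argument). Alternatively, and more in the spirit of Etnyre--Ghrist, one can use that every contact $3$-manifold is supported by \emph{some} open book; if its page can be taken to contain an embedded disk on which the monodromy is the identity, we splice $\varphi$ into the monodromy there. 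Either way, the outcome is a contact form $\alpha$ on $M$ with $\ker\alpha=\xi$ whose Reeb field $R$ has a disk-like Poincar\'e section with return map conjugate to $\varphi$.

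The Turing-completeness conclusion is then almost immediate: apply the previous step to the specific $\varphi$ furnished by Proposition~\ref{prop:areaShift} for a reversible universal Turing machine (so $\varphi$ restricted to the square Cantor set is conjugate to the associated bijective generalized shift). Computable initial configurations $c$ of $T_U$ correspond under $\varphi$ to computable points $q_c$ of $C^2\subset D$ with explicit rational-in-base-$3$ coordinates; the halting configurations form a computable open set $V\subset D$; and the Reeb trajectory through $q_c$ meets $V\times\{0\}$ (equivalently, a computable open set $U_c\subset M$ obtained by flowing $V$ a short time) precisely when $\varphi^n(q_c)\in V$ for some $n$, i.e.\ when $T_U$ halts on $c$. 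Since computability is preserved by the explicit suspension and embedding maps, $(p_c,U_c)$ satisfy Definition~\ref{TC}.

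The main obstacle I anticipate is the second step: genuinely realizing the model solid torus $N$ \emph{inside the prescribed $(M,\xi)$} without altering the contact structure. Building a Reeb flow whose return map is $\varphi$ on \emph{some} contact $3$-manifold is the soft Thurston--Winkelnkemper part; the work is in the relative/parametric gluing — ensuring the contact form on the boundary of the plug matches that of an ambient Reeb flow for $\xi$ and that the resulting global form still has kernel $\xi$ up to contactomorphism. I would handle this with a careful choice of the primitive $\lambda$ (vanishing to high order where needed), a Gray-stability deformation supported in the collar, and Giroux's correspondence between open books and contact structures to absorb the monodromy modification; the energy/volume bookkeeping, while not conceptually hard, is where the care lies.
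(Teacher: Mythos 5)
Your first step contains a genuine gap, and it sits exactly at the load-bearing point of the argument. For the Thurston--Winkelnkemper form $\alpha=dt+\pi^*\lambda$ to descend to the mapping torus with Reeb field proportional to the suspension field $\partial_t$, you need a primitive with $\varphi^*\lambda=\lambda$, not merely $\varphi^*\omega=\omega$. Writing $\varphi^*\lambda-\lambda=dh$, finding a corrected primitive $\lambda+dg$ that is honestly $\varphi$-invariant amounts to solving the cohomological equation $h+g\circ\varphi-g=\mathrm{const}$, which is obstructed: evaluating along periodic orbits of $\varphi$ forces the (averaged) action of every periodic point to agree with the boundary value of $h$, and this fails for a general area-preserving $\varphi$ --- in particular for the diffeomorphisms of Proposition~\ref{prop:areaShift}, whose Cantor-set dynamics has many periodic orbits. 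No ``Moser-type averaging'' removes this. The standard TW repair (interpolating between $\lambda$ and $\varphi^*\lambda$ in the $t$-direction and taking $K\,dt+\lambda_t$ with $K\gg1$) does give a contact form, but its Reeb field acquires a horizontal component $V_t$ with $\iota_{V_t}d\lambda=-\dot\lambda_t$, so the first-return map is $\varphi$ composed with the time-one map of that extra isotopy: it is only \emph{isotopic} to $\varphi$, not conjugate to it. For Turing completeness this is fatal, since the conjugacy with the generalized shift on the square Cantor set is a property of the actual iteration, not of the isotopy class. (A minor additional slip: the contact condition is $C^1$-open, not $C^0$-open, since it involves $d\alpha$.)

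The construction in~\cite{CMPP} avoids this by not putting the monodromy in the gluing at all. Since the group of compactly supported area-preserving diffeomorphisms of the disk is connected and the flux obstruction vanishes, $\varphi$ is the time-one map of a compactly supported time-dependent Hamiltonian $H_t$; on the \emph{trivial} product $D\times S^1$ one takes $\alpha=\lambda+(H_t+C)\,dt$ with $d\lambda=\omega$ and $C\gg1$. A direct computation gives $\alpha\wedge d\alpha=(H_t+C+\lambda(X_{H_t}))\,dt\wedge\omega>0$ and Reeb field a positive rescaling of $\partial_t+X_{H_t}$, whose return map on $D\times\{0\}$ is \emph{exactly} $\varphi$. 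Near $\partial D$ the form is the standard rotationally symmetric $\lambda+C\,dt$, so the solid torus can be inserted into the standard tubular neighborhood of a transverse knot in $(M,\xi)$ --- this is close in spirit to your ``plug in a collar'' alternative --- and a relative Gray-stability argument along the path $\lambda+(C+s(H_t-C+C))dt$, constant near the boundary, shows the kernel can be taken to be $\xi$ itself. Your final paragraph on transferring computability (points of $C^2$ with explicit base-$3$ coordinates, the halting open set, invariance under the flow-box identification) matches the paper and is fine once the return map is genuinely $\varphi$.
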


The final step involves assembling all the pieces of the puzzle together.
Combining Proposition~\ref{prop:areaShift}, Theorem~\ref{thm:ReebDisk} and Theorem~\ref{thm:EGcorres}, which establishes the correspondence between Beltrami fields and Reeb flows, we achieve the desired result for stationary Euler flows.

\begin{corollary}\label{cor:turing}
There exists an Eulerisable field $X$ on $\mathbb S^3$ that is Turing complete. The metric $g$ that makes $X$ a solution of the stationary Euler equations can be assumed to be the round metric in the complement of an embedded solid torus. Furthermore, $X$ admits a disk-like cross section $D$ whose first return map is a diffeomorphism $\varphi:D\to D$ which is Turing complete and the identity in a neighborhood of $\partial D$.
\end{corollary}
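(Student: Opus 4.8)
The plan is to glue together the three results already in hand — Proposition~\ref{prop:areaShift}, Theorem~\ref{thm:ReebDisk}, and the Etnyre--Ghrist correspondence of Theorem~\ref{thm:EGcorres} — after feeding them the output of a \emph{reversible} universal Turing machine, and then to upgrade Turing completeness of a first-return map to Turing completeness of the ambient flow. No genuinely new hard analysis is needed beyond what those statements already package; the work is in organising the chain and in checking two ``soft'' points: that the metric can be kept round away from a solid torus, and that the flow (not merely its return map) satisfies Definition~\ref{TC}.

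First I would fix a reversible universal Turing machine $T_U$ (these exist; see \cite{CMPP} and the references therein). By Moore's conjugacy recalled above, $T_U$ is conjugate, through an injective computable encoding $\varphi_0:\cP\to A^{\mathbb Z}$, to a bijective generalized shift $\phi$, which is realised as an area-preserving block map of the square Cantor set $C^2\subset I^2$ with pairwise disjoint image blocks. Proposition~\ref{prop:areaShift} then yields a $C^\infty$ area-preserving diffeomorphism $\varphi:D\to D$, equal to $\id$ near $\partial D$, whose restriction to $C^2$ is conjugate to $\phi$; and by \cite[Theorem~5.2]{CMPP} this discrete system is Turing complete, with computable halting points $p_c\in C^2$ and a computable open set $U\subset D$ assembled from the coding blocks of the configurations in state $q_{halt}$. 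Next I would apply Theorem~\ref{thm:ReebDisk} with $(M,\xi)=(\mathbb S^3,\xi_{\mathrm{std}})$ and this $\varphi$, obtaining a contact form $\alpha$ on $\mathbb S^3$ whose Reeb field $R$ has a disk-like Poincar\'e section with first-return map conjugate to $\varphi$, hence a Turing-complete Reeb field.

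The metric then comes from Theorem~\ref{thm:EGcorres}: $X:=R$ (or a nonzero rescaling of it) is a smooth, nonsingular, rotational Beltrami field for some metric $g$ on $\mathbb S^3$, and being Beltrami it has constant Bernoulli function, so it solves the stationary Euler equations for $g$ and is Eulerisable. To see that $g$ can be taken round outside an embedded solid torus, I would use that, because $\varphi=\id$ near $\partial D$, the construction behind Theorem~\ref{thm:ReebDisk} can be set up as a perturbation of the Hopf open book of $(\mathbb S^3,\xi_{\mathrm{std}})$ supported inside a solid torus $\cT$ (a neighbourhood of the suspension of $\supp(\varphi-\id)$), so that $R$ agrees on $\mathbb S^3\setminus\cT$ with the standard Hopf field; since the Hopf field is a curl eigenfield for the round metric, the Etnyre--Ghrist metric needs no modification there. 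Finally, to pass from $\varphi$ to the flow: given $c$, I would take $p_c'\in D\subset\mathbb S^3$ to be the section point over $p_c$ and $U_c'$ a short flow-saturation of $U$; both are computable (the coordinates of $p_c$ are explicit in base $3$, and the section embedding, the return time, and the flow are explicit), and the positive $X$-orbit through $p_c'$ meets $U_c'$ if and only if the $\varphi$-orbit of $p_c$ meets $U$, i.e.\ if and only if $T_U$ halts on $c$ — which is exactly Definition~\ref{TC}.

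The hard part, or rather the part that is not purely formal at the level of this corollary, will be the gap-filling behind Proposition~\ref{prop:areaShift} and \cite[Theorem~5.2]{CMPP}: the extension $\varphi$ of Moore's block map must not let the orbit of $p_c$ stray into the halting open set $U$ before $T_U$ actually halts, and the encoding $p_c\mapsto C^2$ must remain exactly computable. This is precisely what those two statements are engineered to guarantee, so here it can be invoked, but it is where the real content lies. A second, lesser point is that using Theorem~\ref{thm:ReebDisk} as a black box does not quite suffice for the ``round outside a solid torus'' claim: one has to inspect the open-book model used in its proof to confirm that $R$ is genuinely the standard Hopf Reeb field on $\mathbb S^3\setminus\cT$.
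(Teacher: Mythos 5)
Your proposal is correct and follows essentially the same route as the paper: combine Proposition~\ref{prop:areaShift}, Theorem~\ref{thm:ReebDisk} applied to $(\mathbb S^3,\xi_{\mathrm{std}})$, and Theorem~\ref{thm:EGcorres}, observing that the contact form can be taken equal to $\alpha_{\mathrm{std}}$ outside a solid torus so that the Etnyre--Ghrist metric is the round one there. Your extra remarks (reversible universal machine, passing from the return map to the flow via a computable section point and flow-saturated open set) are exactly the ingredients the paper delegates to \cite{CMPP}.
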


The fact that the metric can be assumed to be the round one in the complement of an embedded solid torus needs some explanation. When applying Theorem~\ref{thm:ReebDisk}, we take as ambient manifold the standard contact sphere $(\mathbb S^3,\xi_{std})$. Then, the contact form whose Reeb field realizes a given area-preserving diffeomorphism of the disk as a Poincar\'e map can be chosen to coincide with the standard contact form $\alpha_{std}$ outside a solid torus. To conclude, one can check that the metric associated to $\alpha$ via Theorem~\ref{thm:EGcorres} can be taken to be the round one whenever $\alpha$ coincides with $\alpha_{std}$.

The metric $g$ in Corollary~\ref{cor:turing} is any metric compatible with the contact form associated to the Reeb field $R\equiv X$ constructed in Theorem~\ref{thm:ReebDisk}. It is natural to ask if one can choose an ``optimal'' metric among all compatible metrics. To the end of defining optimal compatible metrics, Chern and Hamilton~\cite{CH} introduced the functional
\[
E(g)=\int_{\mathbb S^3}|L_Rg|^2_g \text{vol}_g
\]
on the space of compatible metrics, and considered metrics that are (local) minima of this functional, what they called \emph{critical compatible metrics}. The following proposition shows that the metric $g$ in Corollary~\ref{cor:turing} cannot be critical:

\begin{proposition}\label{prop:nocritical}
There is no critical metric $g$ that makes the Reeb field $R$ from Theorem~\ref{thm:ReebDisk} on $M=\mathbb S^3$ a Beltrami field.   
\end{proposition}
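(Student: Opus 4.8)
The plan is to play off two robust dynamical features of the Reeb field $R$ produced by Theorem~\ref{thm:ReebDisk} against the rigidity that criticality imposes on the Reeb dynamics in dimension three; both features concern $R=R_\alpha$ alone, hence are insensitive to the choice of compatible metric. First, by the construction behind Theorem~\ref{thm:ReebDisk} and Corollary~\ref{cor:turing}, the contact form $\alpha$ may be taken equal to $\alpha_{std}$ outside an embedded solid torus $\mathcal{T}\subset\mathbb S^3$, so on $\mathbb S^3\setminus\mathcal{T}$ the field $R$ is the Hopf field; in particular there are periodic orbits of $R$ --- the Hopf fibres contained in $\mathbb S^3\setminus\mathcal{T}$ --- whose first-return map on a small transverse disk is the identity. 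Second, the first-return map $\varphi$ of $R$ on the Poincar\'e disk $D\subset\mathcal{T}$ restricts on the square Cantor set $C^2\subset D$ to a conjugate of a bijective generalized shift emulating a reversible universal Turing machine; feeding that machine a configuration on which it runs forever without ever repeating a configuration (e.g.\ one simulating a machine that writes $1$'s indefinitely) yields a point $x\in C^2$ whose two-sided $\varphi$-orbit is infinite and injective, so that $\Lambda:=\overline{\{\varphi^n(x):n\in\mathbb Z\}}$ is an infinite compact $\varphi$-invariant subset of the totally disconnected set $C^2$.

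Suppose, for contradiction, that some compatible metric $g$ for $\alpha$ is critical for $E$ (since $E$ is defined on compatible metrics, such a $g$ is in particular compatible with $\alpha$, and the fluid-dynamical phrasing ``$g$ makes $R$ Beltrami'' is then automatic). The input I would invoke is the structure of critical compatible metrics on a closed $3$-manifold extracted from the Euler--Lagrange equation of $E$ by Chern--Hamilton~\cite{CH} and its refinements: such a $g$ is either $K$-contact --- $L_Rg\equiv 0$ and $R$ is a Killing field --- or its torsion has full rank everywhere and $R$ generates a projectively (conformally) Anosov flow on $\mathbb S^3$. I would rule out both alternatives.

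If the flow of $R$ is projectively Anosov, then along any periodic orbit the linearised return map on the contact plane $\ker\alpha$ has eigenvalues of distinct modulus --- this is forced by the domination inequality applied to high iterates --- so the return map is not the identity even to first order; this contradicts the first feature, since the Hopf fibres in $\mathbb S^3\setminus\mathcal{T}$ are periodic orbits of $R$ with identity return map. If instead $g$ is $K$-contact, then $R$ is Killing on the compact manifold $\mathbb S^3$, so the closure of the one-parameter group it generates is a torus inside $\Isom(\mathbb S^3,g)$, and every orbit closure of $R$ is a smooth embedded sub-torus of dimension one or two (dimension three being excluded because $\mathbb S^3$ is simply connected). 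For the point $x$ of the second feature, $\overline{R\text{-orbit of }x}$ cannot be $1$-dimensional (that would make the $\varphi$-orbit of $x$ periodic, not infinite), so it is a $2$-torus $T$ carrying a linear flow with a dense orbit; then $\Lambda\subseteq T\cap D$, where $T\cap D$ is a smooth compact $1$-manifold transverse to the flow, and the first-return map of a linear flow on $T$ to $T\cap D$ is conjugate to an irrational rotation (Denjoy's theorem rules out a Cantor minimal set), hence minimal, so $\Lambda$ is a finite union of smooth circles --- contradicting that $\Lambda$ is infinite and totally disconnected. Hence no critical metric can make $R$ Beltrami.

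The step I expect to be the main obstacle is the one in the second paragraph: identifying the precise structural alternative satisfied by critical compatible metrics on $\mathbb S^3$ and verifying that ``not $K$-contact'' genuinely forces projectively Anosov Reeb dynamics, with no residual ``mixed'' possibility, so that the two dynamical features above really do exhaust all cases. Granting that, the remainder --- reading off the two features from the construction, and the torus/Denjoy bookkeeping in the $K$-contact case --- is routine.
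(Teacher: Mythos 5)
The load-bearing step is exactly the one you flag, and as written it is a genuine gap: the dichotomy ``critical compatible metric $\Rightarrow$ $K$-contact, or nowhere-vanishing torsion and projectively Anosov Reeb flow'' is not proved in Chern--Hamilton \cite{CH}, and you do not prove it either. The structural input the paper actually uses is the recent classification of critical compatible metrics in \cite{MPS}, and in a stronger form than your dichotomy: on $\mathbb{S}^3$ a critical compatible metric forces $(\mathbb{S}^3,\alpha,g)$ to be Sasakian, so the projectively Anosov branch never has to be ruled out at all. From Sasakianity the paper then argues differently from your $K$-contact branch: by Belgun's rigidity result \cite{Be}, the Reeb field is orbitally conjugate to a Seifert flow $l\partial_{\phi_1}+k\partial_{\phi_2}$ in Hopf coordinates, whose first-return map on the disk $\{\phi_1=0\}$ is a rigid rotation; since the return map of the field $R$ from Theorem~\ref{thm:ReebDisk} is the identity near $\partial D$, the rotation frequency is $0$ mod $2\pi$, the return map is the identity, and the identity cannot simulate a universal Turing machine. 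So unless you supply, or correctly cite, the classification of critical compatible metrics (the ``refinements'' you allude to are precisely \cite{MPS}, not \cite{CH}), your proof is incomplete at its crux.

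Granting that input in your two-case form, the remainder is essentially sound and genuinely different from the paper's, with two repairable blemishes. In the projectively Anosov case, the existence of a Hopf fibre entirely contained in $\mathbb{S}^3\setminus\mathcal{T}$ is not guaranteed by the statement of Corollary~\ref{cor:turing} alone (a priori the solid torus could meet every Hopf fibre); but you do not need Hopf fibres: since $\varphi=\mathrm{id}$ on a collar of $\partial D$, every point of that collar lies on a periodic orbit of $R$ whose linearized return map is the identity, which already contradicts domination along periodic orbits. In the $K$-contact case your Killing-field/orbit-closure argument works, but the Denjoy step is unnecessary and slightly off ($T\cap D$ need not be a single circle, nor is the induced map obviously a circle homeomorphism): minimality of the linear flow on the $2$-torus $T$ makes the intersections of the orbit with the transverse $1$-manifold $T\cap D$ dense in it, so $\Lambda=T\cap D$ contains a connected $1$-manifold, contradicting $\Lambda\subset C^2$ totally disconnected. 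In short: your route, if the dichotomy is available, avoids Belgun's classification and uses only soft dynamical obstructions (domination versus identity return maps, isometric flows versus Cantor dynamics); the paper's route buys simplicity by importing the single implication ``critical $\Rightarrow$ Sasakian'' from \cite{MPS} and then reducing everything to an elementary rigid-rotation computation.
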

\begin{proof}
We describe $\mathbb S^3$ as the set of points $(z_1,z_2)\in\mathbb C^2$ with $|z_1|^2+|z_2|^2=1$. We parametrize $\mathbb S^3$ using Hopf coordinates
\[
(z_1,z_2)=(\cos s e^{i\phi_1},\sin s e^{i\phi_2})
\]
with $s\in[0,\pi/2]$, $\phi_{1,2}\in[0,2\pi)$. 

Assume that the Reeb field $R$, associated to a contact form $\alpha$, admits a critical compatible metric $g$. Then, according to the classification in~\cite{MPS}, the Riemannian contact structure $(\mathbb S^3,\alpha,g)$ is Sasakian. Sasakian structures on $\mathbb S^3$ are very rigid and are classified, in particular Belgun showed~\cite[Lemma~1]{Be} that the Reeb field corresponding to a Sasakian contact structure on $\mathbb S^3$ is orbitally conjugate to a vector field that reads in Hopf coordinates as
\[
l\partial_{\phi_1}+k\partial_{\phi_2}=:R_{lk}
\]
for some positive constants $l,k$. Taking as Poincar\'e section the disk $D:=\{\phi_1=0\}$, we easily compute the first return map to be 
\[
\Pi_{lk}(s,\phi)=(s,\phi+\omega)
\]
with $\omega:=\frac{2\pi k}{l}$. $\Pi_{lk}$ is a rigid rotation of frequency $\omega$. Since the Reeb field $R$ constructed in Corollary~\ref{cor:turing} has a Poincar\'e map on a transverse disk that is the identity in a neighborhood of the boundary, this immediately implies that $\omega=0$ mod $2\pi$, so $\Pi_{lk}=id$. It is obvious that the identity map cannot simulate a universal Turing machine, so $R$ cannot be Turing complete. This concludes the proof of the proposition.  
\end{proof}

\begin{remark}\label{rem:rigidrot}
More generally, it can be argued that a Seifert flow  $l\partial_{\phi_1}+k\partial_{\phi_2}$ (or a rigid rotation on a disk), cannot be Turing complete. Suppose that given two computable sets we can determine whether they intersect (this might not be the case if they are at distance zero, but we assume there is a black box like an oracle machine that can determine this). Even under this assumption, the reachability problem of a Turing complete system is undecidable, i.e., there is an exactly computable initial condition $p$ and a computable open set $U$ for which determining if the positive trajectory of the system starting at $p$ intersects $U$ is an undecidable problem. On the other hand, for a Seifert flow, an exactly computable initial condition belongs to a computable invariant set $V$: an invariant circle if $l/k$ is rational or an invariant torus if it is irrational. The positive trajectory of $p$ intersects $U$ if and only if the intersection of $U$ and $V$ is non-empty, which can be determined using our black box. Thus, under the assumption that we have access to such an oracle machine, the reachability problem is decidable, and we conclude that a Seifert flow (and hence a rigid rotation) cannot be Turing complete.
\end{remark}

\section{Constructing stationary ideal fluids that are Turing complete (construction 2)}\label{S.introEuler}

In the former construction of a Turing complete stationary solution of the Euler equations, the Riemannian metric is deformed and adapted to the underlying contact geometry.
These deformations are controlled using sophisticated techniques, which can make the metric highly intricate in certain regions.
The contact/hydrodynamics duet fails when the metric is prescribed, requiring new strategies. In the standard classical formulation of the Euler equations in 3D, the prescribed metric is the Euclidean one on $\mathbb R^3$.

In this section, we provide an overview of the construction  in \cite{CMP2}, where we achieve a universal Turing machine simulated by a steady fluid flow in Euclidean 3D-space. 

\subsection{Statement of the theorem}
 The main theorem we proved in \cite[Theorem 1]{CMP2} is the existence of Turing complete stationary solutions to the Euler equations in $\mathbb{R}^3$ endowed with the Euclidean metric.

\begin{theorem}
There exists a Turing complete Beltrami field $u$ in Euclidean space $\mathbb{R}^3$.
\end{theorem}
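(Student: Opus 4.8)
The plan is to adapt Construction 1, where the metric was flexible, to the rigid Euclidean setting by working backwards from a dynamical system we want to embed and using a PDE-solvability theorem rather than the Etnyre--Ghrist correspondence. The key observation is that, for a fixed eigenvalue $\lambda\neq 0$, the equation $\operatorname{curl} u = \lambda u$ is a linear PDE, and a local solution near a point can be constructed from Cauchy data on a hypersurface via the Cauchy--Kovalevskaya theorem provided the data and the surface are real-analytic; this is exactly the regularity class in which one can hope to \emph{prescribe} interesting dynamics while retaining a genuine Beltrami field for the standard metric. So the first step is to recast the goal: instead of ``simulate a Turing machine on an abstract manifold,'' we want an explicit real-analytic vector field on (an open set of) $\mathbb R^3$ whose trajectories encode the orbits of a universal Turing machine, and which happens to satisfy $\operatorname{curl} u = u$.

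Concretely I would proceed as follows. (i) Start from Moore's generalized shift attached to a reversible universal Turing machine and, as in Proposition~\ref{prop:areaShift}, realize it as a $C^\infty$ --- and here crucially we will need \emph{real-analytic} --- area-preserving diffeomorphism $\varphi$ of a disk that is the identity near the boundary; the passage from $C^\infty$ to analytic requires smoothing the block map interpolations by analytic bump-type constructions, which is delicate but should be doable since the square Cantor set has measure zero and the identity-near-boundary condition gives room. (ii) Suspend $\varphi$ to obtain an analytic vector field on $D\times\mathbb S^1$ (or $D\times\mathbb R$) whose first-return map is $\varphi$; since $\varphi$ is the identity near $\partial D$, this suspension can be glued into a larger analytic flow. (iii) Now comes the Euclidean realization: embed a neighborhood of this suspension into $\mathbb R^3$ and use the Cauchy--Kovalevskaya theorem to solve $\operatorname{curl} u = u$ with Cauchy data prescribed on a real-analytic hypersurface transverse to the flow so that the resulting Beltrami field $u$ has, on that hypersurface, the prescribed vector field as its ``shadow'' --- more precisely, so that the Poincaré return map of $u$ on an invariant analytic disk coincides with $\varphi$. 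The gradient-dynamical-systems input enters here: one uses a Lyapunov/gradient structure away from the computing region to control the global extension of the locally-defined analytic Beltrami field to all of $\mathbb R^3$ and to guarantee that the trajectory through the computable point $p_c$ behaves as required (reaching $U_c$ iff the machine halts). (iv) Finally, check computability of $p_c$ and $U_c$ in the sense of Definition~\ref{TC}: since everything is built from explicit analytic data with rational parameters, the coordinates of $p_c$ and an approximation scheme for $U_c$ are extractable.

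The main obstacle, I expect, is step (iii): Cauchy--Kovalevskaya only yields a \emph{local} analytic solution near the chosen hypersurface, with no control on its domain of existence, whereas Turing completeness forces us to embed an \emph{unbounded} amount of dynamics (the full forward orbit of a universal machine, which may run forever). Reconciling a local existence theorem with an essentially infinite dynamical pattern is the crux: one must arrange the embedded flow so that the ``interesting'' part lives in a bounded region where the analytic Beltrami field is guaranteed to exist, while the trajectory still explores infinitely many Turing configurations inside that bounded region (this is where the Cantor-set encoding, which packs infinitely many configurations into a compact set, does the heavy lifting), and then extend the field analytically outward --- giving up any energy control, which is why the resulting $u$ has infinite energy. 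A secondary difficulty is maintaining \emph{analyticity} through the interpolation in step (i): Moore's block maps are only piecewise linear, and making the gap-filling both area-preserving and real-analytic, while still the identity near the boundary, requires a careful analytic version of the disk-diffeomorphism construction of \cite{CMPP}.
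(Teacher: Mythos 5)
Your plan is essentially ``make Construction 1 real-analytic and force it into a Euclidean Beltrami field via Cauchy--Kovalevskaya,'' and it breaks at two specific points. First, step (i) is impossible as stated: a real-analytic diffeomorphism of the disk that equals the identity on an open neighborhood of $\partial D$ is, by the identity theorem for real-analytic maps on a connected domain, the identity everywhere, so there is no analytic area-preserving disk map that is the identity near the boundary and simulates a universal Turing machine; the ``room'' you invoke does not exist, and this is precisely why the smooth disk-map/Poincar\'e-section strategy of \cite{CMPP} cannot simply be upgraded to the rigid Euclidean setting (Euclidean Beltrami fields with constant $\lambda$ are automatically analytic). Second, step (iii) has no mechanism: Cauchy--Kovalevskaya lets you prescribe the field on a hypersurface, not the return map on a transverse section, which is a global feature of the flow you cannot read off from (or engineer into) the Cauchy datum; moreover, the datum on a surface cannot be arbitrary --- if the surface is invariant (tangential datum on $\{z=0\}$), the constraint $\partial_x u_2-\partial_y u_1=\lambda u_3=0$ forces the induced planar field to be curl-free, i.e.\ a gradient, which is incompatible with carrying an area-preserving shift-simulating map as induced dynamics. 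Your proposed fix for the local-vs-global tension (pack the computation into a bounded region via the Cantor encoding, then extend outward) also does not work, since there is no ``extend analytically outward'' step available for an eigenfield of curl.

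The paper's actual route is different in exactly these places. It abandons the disk diffeomorphism and the Cantor encoding altogether: it builds a Turing complete \emph{gradient} field $X=\nabla f$ in the plane, with one unbounded band and one trajectory per initial configuration, the configurations being encoded by explicit points $\varphi(q,t)=2^{-q}3^{-r}5^{-s}$ and open neighborhoods $U_{(q,t)}$; the infinite run of the machine is spread out to infinity in the plane rather than compressed into a compact region. The construction is made \emph{weakly robust}: normal contraction onto the coding curves guarantees Turing completeness survives perturbations decaying fast at infinity. Then $f$ is approximated by an entire function $F$ (Frih--Gauthier), so $\nabla F$ is still Turing complete, and a \emph{global} Cauchy--Kovalevskaya theorem for the curl (Pong\'erard--Wagschal, valid for entire Cauchy data) produces a Beltrami field $u$ on all of $\mathbb R^3$ with $u|_{z=0}=\nabla F$, the plane being invariant; Turing completeness of $u$ follows by restriction. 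The tangential-datum-must-be-a-gradient constraint that sinks your step (iii) is thus the very reason the planar model is taken to be a gradient flow in the first place, and the global CK theorem, not an outward extension of a local solution, is what handles the unboundedness (at the cost of infinite energy).
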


The strategy of the proof can be sketched as follows.

\begin{enumerate}
\item We show that there exists a Turing complete vector field $X$ in the plane $\mathbb{R}^2$ that is of the form $X=\nabla f$ where $f$ is a smooth function.
\item Furthermore, we require that if we perturb $X$ by an error function $\varepsilon:\mathbb{R}^2\rightarrow \mathbb{R}$ that decays rapidly enough at infinity, then we obtain a vector field that is Turing complete as well.
\item We show that a vector field in $\mathbb{R}^2$ of the form $X=\nabla g$, where $g$ is an entire function, can be extended to a Beltrami field $v$ in $\mathbb{R}^3$ such that $v|_{z=0}=X$. That is, $v$ leaves the plane $\{z=0\}$ invariant and coincides with $X$ there.
\item We approximate $f$ by an entire function $F$ with an error that decays rapidly enough. Hence $\tilde X=\nabla F$ is Turing complete and extends as a Beltrami field $u$ on $\mathbb R^3$. It easily follows that $u$ is Turing complete as well.
\end{enumerate}

In the next two subsections, we outline the main steps of the proof. In the first subsection, we provide an overview of the arguments for steps (i) and (ii). The third step utilizes a global Cauchy-Kovalevskaya theorem tailored to the curl operator, which we elaborate on subsequently. The fourth step involves a general result concerning the approximation of smooth functions by entire functions with errors that decay arbitrarily, as detailed in \cite{FG}.

\subsection{Weakly robust Turing complete gradient flow in the plane}

In this subsection, we outline the construction of a Turing complete gradient flow on $\mathbb R^2$ and discuss how to ensure that its Turing completeness is robust under perturbations that decay sufficiently fast at infinity. Following the rough idea presented in Subsection~\ref{ss:TC}, we will first show how to encode the configurations of a given universal Turing machine $T_U$ into $\mathbb{R}^2$. Without loss of generality, we assume that $T_U=(Q,\Sigma, q_0, q_{halt}, \delta)$ with $Q=\{1,...,m\}$ for some $m\in \mathbb{N}$ and $\Sigma=\{0,1\}$. \\

\paragraph{\textbf{The encoding.}} We first construct an injective map from $\mathcal{P}=Q\times \mathcal{A}$ to $I=[0,1]$, where we recall that $\mathcal{A}$ is the set of compactly supported sequences in $\Sigma^\mathbb{Z}$. Given $(q,t)\in \mathcal{P}$, write the tape as
\[
\cdots 000t_{-a}\cdots t_b00\cdots,
\]
where $t_{-a}$ is the first negative position such that $t_{-a}=1$ and $t_b$ is the last positive position such that $t_b=1$. If every symbol in a negative (or positive) position is zero, we choose $a=0$ (or $b=0$ respectively).
Set the non-negative integers given by concatenating the digits $s:=t_{-a}\cdots {t_{-1}}$, $r:=t_b\cdots t_0$, and introduce the map
\[
\varphi(q,t):=\frac{1}{2^q3^r5^s}\in (0,1),
\]
which is injective and its image accumulates at $0$. There exist pairwise disjoint intervals $I_{(q,t)}$ centered at $\varphi(q,t)$, for instance of size $\frac{1}{4}\varphi(q,t)^2$. To introduce an encoding into $\mathbb{R}^2$ we proceed as follows. Fix $\epsilon>0$ small, we encode $(q,t)$ as the open set
\[
U_{(q,t)}:=\bigcup_{j,k=0}^\infty I^j_{(q,t)}\times (k-\epsilon/2,k+\epsilon/2)
\]
where $I^j_{(q,t)}:=I_{(q,t)}+(2j,2j)$. In other words, we are looking at any interval of the form $I^{j,k}:=[2j,2j+1]\times \{k\} \subset \mathbb{R}^2$, with $j,k\in \mathbb{N}$, and considering an $\varepsilon$-thickening of $I_{(q,t)}$ understood as a subset of $I^{j,k}\cong [0,1]$. Figure \ref{fig:1} gives a visualization of part of one of the open sets $U_{(q,t)}$ in a region of the plane.

\begin{figure}[!h]
\begin{center}
\begin{tikzpicture}
     \node[anchor=south west,inner sep=0] at (0,0) {\includegraphics[scale=0.12]{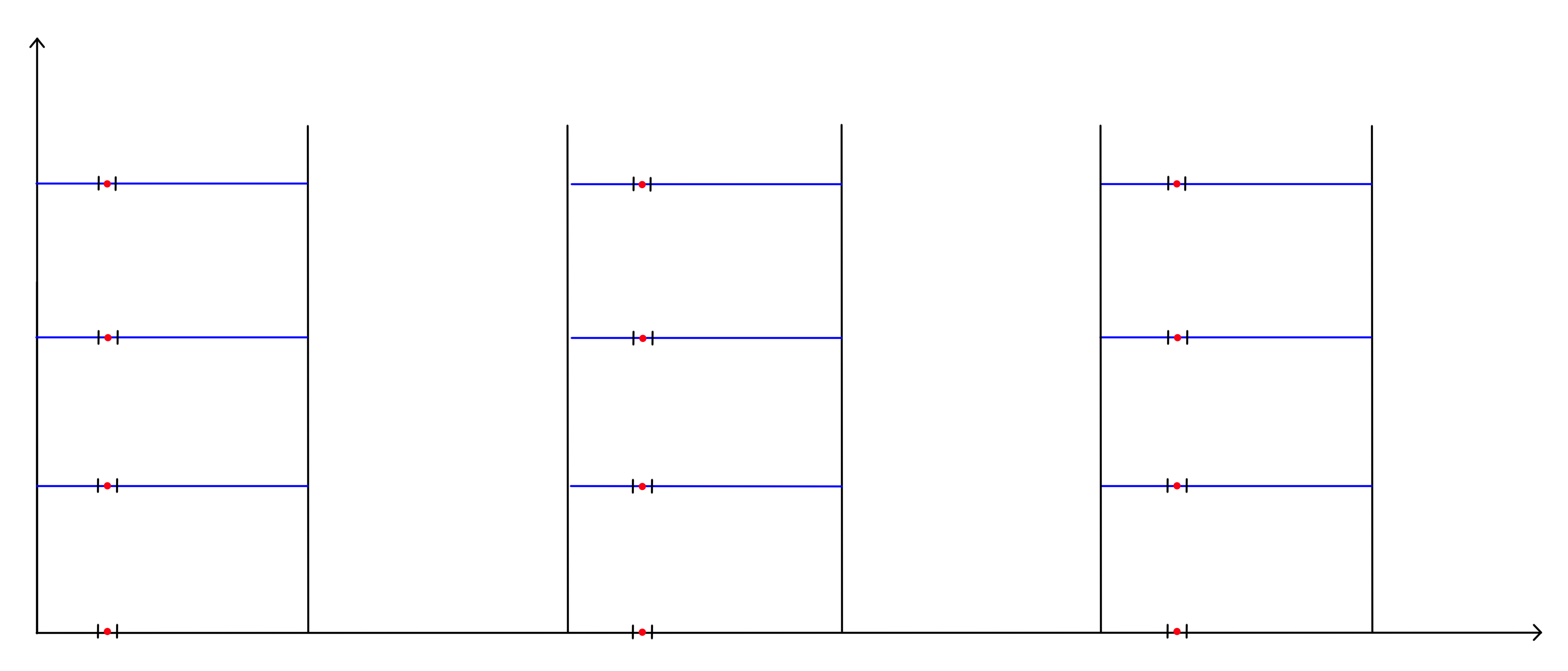}};
   \node at (13.5,0) {$x$};
   \node at (-0.05,5.6) {$y$};
     \node at (2.75,0.1) {$1$};
    \node at (5.05,0.1) {$2$};
    \node at (7.55,0.1) {$3$};
    
    \node[blue] at (-0.05, 1.6) {$I^{0,1}$};
    
    \node[scale=0.55] at (1,0.1) {$I^0_{(q,t)} \times \{0\}$};
    \node[scale=0.55] at (1,1.4) {$I^0_{(q,t)} \times \{1\}$};

    \node[scale=0.55] at (5.9,0.1) {$I^1_{(q,t)}\times \{0\}$};
     \node[scale=0.55] at (5.9,1.4) {$I^1_{(q,t)}\times \{1\}$};   

\end{tikzpicture}
\caption{The open set $U_{(q,t)}$ is the $\varepsilon$-thickening of the intervals $I^j_{(q,t)}\times \{k\}$}
\label{fig:1}
\end{center}
\end{figure}

The countable set of initial configurations $\mathcal{P}_0=\{(q_0,t)\enspace | \enspace t\in \mathcal{A}\}$ admits a (computable) ordering which we will not specify, so that we can write it as
$\mathcal{P}_0=\{c_i=(q_0,t^i) \enspace | \enspace i\in \mathbb{N}\}$. Given $c_i$, the initial condition associated to the vector field that we will construct will be $p_{c_i}=(\varphi(c_i)+i,0) \in \mathbb{R}^2$. This corresponds to the point $\varphi(q_0,t^i)$ of the copy $I^{i,0}$ of the several intervals we considered. \\

\paragraph{\textbf{Integral curves capturing the steps of the algorithm.}} Iterating the global transition function from an initial configuration $c_i=(q_0,t^i)$ gives a countable sequence of configurations $c_i^k=(q_k,t_k^i)=\Delta^k(c_i)$ for each $k$ an integer greater than $1$. On each band $[2i,2i+1]\times [0,\infty)$, we construct a smooth curve $\gamma_i$ such that $\gamma_i\cap \{[2i,2i+1]\times \{k\}\}$ is the point $(2i+\varphi(q_k,t_k^i), k)$, which lies in $U_{(q_k,t_k^i)}$, see Figure \ref{fig:2}.

\begin{figure}[!h]
\begin{center}
\begin{tikzpicture}
     \node[anchor=south west,inner sep=0] at (0,0) {\includegraphics[scale=0.12]{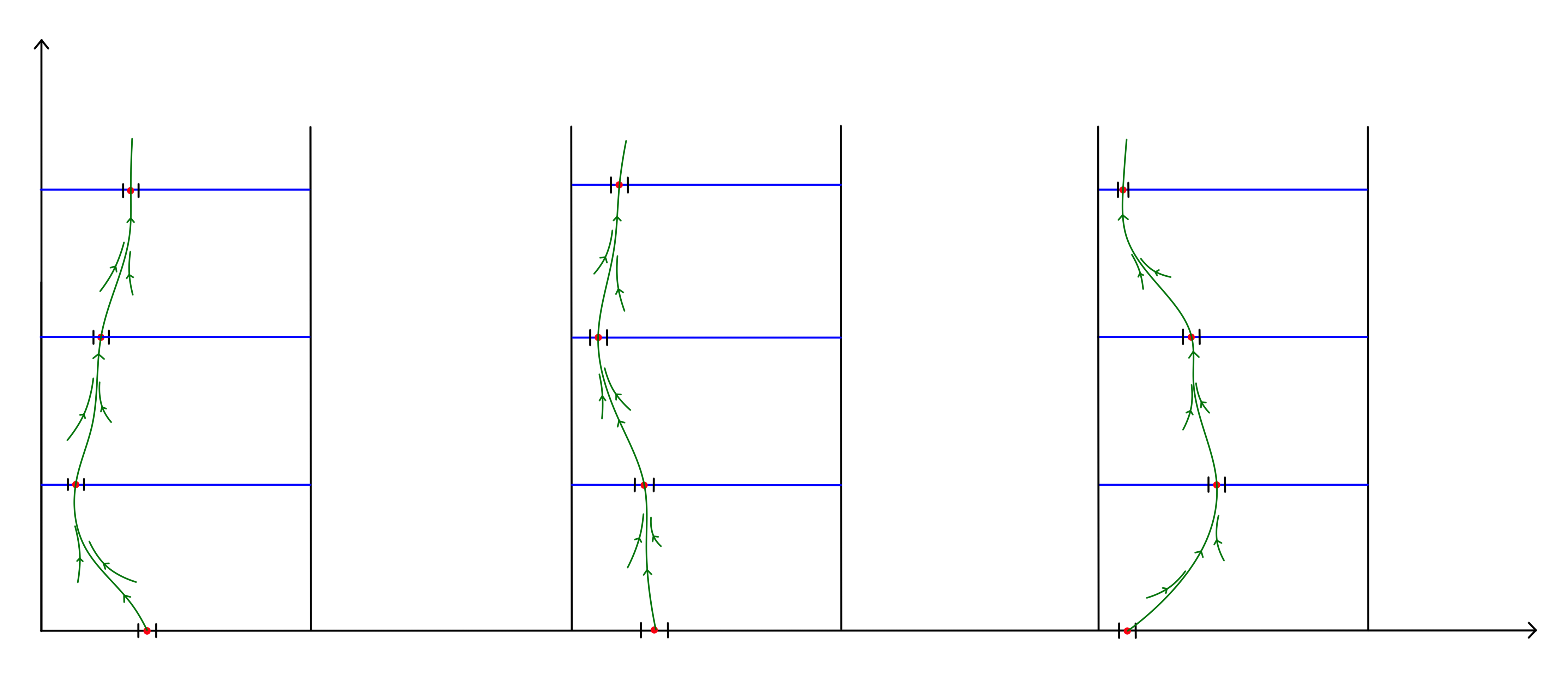}};

          \node[scale=0.8, color=red] at (2, 0.1) {$p_{c_0}=(\varphi(q_0,t^0),0)$};
     \node[scale=0.8, color=red] at (10.7, 0.1) {$p_{c_1}=(2+\varphi(q_0,t^1),0)$};

     \node[color=ForestGreen] at (1.7,2.5) {$\gamma_0$};
      \node[color=ForestGreen] at (6.5,2.5) {$\gamma_1$};

\end{tikzpicture}
\caption{Integral curves following the computations of the Turing machine}
\label{fig:2}
\end{center}
\end{figure}

We conclude by constructing a gradient field $X=\nabla f$ such that each $\gamma_i$ is an integral curve of $X$. Observe now that given an initial condition $c_i$, the integral curve through $p_{c_i}$ will intersect sequentially the open sets $U_{(q_k,t_k^i)}$, thus keeping track of the computations of the machine with initial configuration $c_i$. One easily shows that $X$ is Turing complete, where to each $c_i$ we assign the initial condition $p_{c_i}$, and the open set $U$ for which the trajectory through $c_i$ intersects $U$ if and only if the machine halts with initial configuration $c_i$ is simply $U=\bigcup_{t\in \mathcal{A}} U_{(q_{halt},t)}$, that is, every open set encoding a halting configuration.\\

\paragraph{\textbf{Weak robustness and conclusion.}} Recall that in order to apply the Cauchy-Kovalevskaya theorem, we need $X$ to be the gradient of an entire function. To achieve this, we construct $X$ in a way that the flow normally contracts towards each curve $\gamma_i$ at a strong enough rate, see Figure \ref{fig:2}. This can be used to show that if we perturb $X$ by an error function $\varepsilon(x,y)$ with fast decay at infinity, we obtain a vector field that is again Turing complete. This is because even if the curve $\gamma_i$ will no longer be an integral curve, the integral curve through any of the points $p_{c_i}$ of the perturbed vector field will still intersect sequentially the open sets $U_{(q_k,t_k^i)}$, hence capturing the computations of the Turing machine. The fast decay of the error is necessary since the open sets $U_{(q_k, t_k^i)}$ have no uniform lower bound on their size. This is because the intervals $I_{(q,t)}$ accumulate at zero, and hence their size tends to zero. One can estimate the decay rate of the size of the open sets that need to be intersected by the curves in terms of the distance to the origin, and hence robustness can be achieved under fast decay errors.

The construction concludes by approximating $f$ by an entire function $F$ (using~\cite{FG}), and applying the Cauchy-Kovalevskaya theorem for the curl to the Cauchy datum $\nabla F$ on the plane $\{z=0\}$ (see the following subsection).

\subsection{A Cauchy-Kovalevskaya theorem}
The Cauchy–Kovalevskaya theorem is the main tool to establish local existence and uniqueness for solutions to analytic partial differential equations related to Cauchy data. A Cauchy problem involves finding the solution to a partial differential equation satisfying specific conditions on a non-characteristic hypersurface within the domain. It applies to a wide range of PDEs as far as they admit non-characteristic surfaces.

Unfortunately, the curl operator has no non-characteristic surfaces. The reason is that its Fourier symbol is given by the $3 \times 3$ matrix

$$
\mu(\xi)=i\left(\begin{array}{ccc}
0 & -\xi_{3} & \xi_{2} \\
\xi_{3} & 0 & -\xi_{1} \\
-\xi_{2} & \xi_{1} & 0
\end{array}\right)\,,
$$
which is antisymmetric, so
$\operatorname{det} \mu(\xi)=0$ for all values $(\xi_{1}, \xi_{2},\xi_{3}) \in \mathbb{R}^{3}$.

This means that the Cauchy datum that one can prescribe on a surface cannot be arbitrary, there must be obstructions. Indeed, taking the plane $\{z=0\}$ as the Cauchy surface,
and noticing that the third component of the Beltrami equation is:

$$
\frac{\partial u_{2}}{\partial x}-\frac{\partial u_{1}}{\partial y}=\lambda u_{3}\,,
$$
if we assume that the Cauchy datum is tangent to the surface, that is $u_{3}(x, y, 0)=0$, we easily obtain
$$\frac{\partial u_{2}(x, y, 0)}{\partial x}-\frac{\partial u_{1}(x, y, 0)}{\partial y}=0\,.$$
Therefore, if $\Sigma=\{z=0\}$ and the Cauchy datum is $v(x, y)=v_{1} \partial_{x}+v_{2} \partial_{y}$, a necessary condition for the local solvability of the Beltrami equation is $\frac{\partial v_{1}}{\partial y}-\frac{\partial v_{2}}{\partial x}=0$ on $\mathbb{R}^{2}$, and this happens if and only if
$$v=\nabla_{\mathbb{R}^{2}} F$$ for some analytic function
$$
F: \mathbb{R}^{2} \rightarrow \mathbb{R}
$$

In~\cite{CMP2} we proved that the aforementioned necessary condition is also sufficient for local solvability. Moreover, using a global version of Cauchy-Kovalevskaya due to Pong\'erard and Wagschal~\cite{PW} we also showed that the Beltrami field one obtains using Cauchy-Kovalevskaya is defined on the whole $\mathbb R^3$ provided that the analytic planar function $F$ is entire. This is the content of the following theorem:

\begin{theorem}
Let $F(x, y)$ be an entire function (i.e., it extends as a holomorphic function on $\mathbb C^{2}$ ). Then for any $\lambda \neq 0$ there exists a unique Beltrami vector field $u$ in $\mathbb{R}^{3}$ that solves the Cauchy problem:

$$
\left\{\begin{array}{l}
\operatorname{curl} u=\lambda u \\
\left.u\right|_{z=0}=\nabla_{\mathbb{R}^{2}} F
\end{array}\right.
$$

\end{theorem}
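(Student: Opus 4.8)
The plan is to trade the overdetermined system $\operatorname{curl}u=\lambda u$ for a genuine evolution system in the variable $z$, solve the latter by Cauchy--Kovalevskaya, check that the resulting field actually satisfies the original equation, and then upgrade from a neighbourhood of $\{z=0\}$ to all of $\mathbb R^{3}$. First I would observe that a Beltrami field with $\lambda\neq0$ is automatically divergence free, since $0=\operatorname{div}\operatorname{curl}u=\lambda\operatorname{div}u$. Writing $\operatorname{curl}u=\lambda u$ in components, the first two equations can be solved for $\partial_z u_2$ and $\partial_z u_1$, while $\operatorname{div}u=0$ supplies $\partial_z u_3=-\partial_x u_1-\partial_y u_2$; together these give the first-order system
\begin{equation*}
\partial_z u=\begin{pmatrix}0 & \lambda & \partial_x\\ -\lambda & 0 & \partial_y\\ -\partial_x & -\partial_y & 0\end{pmatrix}u\,,
\end{equation*}
which is already in Cauchy--Kovalevskaya normal form with respect to $z$, so that $\{z=0\}$ is non-characteristic for it. Since $F$ is analytic, the datum $\nabla_{\mathbb R^{2}}F=(\partial_x F,\partial_y F,0)$ is analytic, and the classical Cauchy--Kovalevskaya theorem yields a unique real-analytic $u$ in a neighbourhood of $\{z=0\}$ with $u|_{z=0}=\nabla_{\mathbb R^{2}}F$.

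The second point is to verify that this $u$ solves the original equation and not merely the auxiliary system. Two of the three curl equations, together with $\operatorname{div}u=0$, hold by construction; the third is encoded in $w:=\partial_x u_2-\partial_y u_1-\lambda u_3$, which I claim vanishes identically. Differentiating $w$ in $z$ and substituting the three evolution equations, all terms cancel, so $\partial_z w\equiv0$; moreover $w|_{z=0}=\partial_x\partial_y F-\partial_y\partial_x F-0=0$, precisely because the Cauchy datum is a gradient. This is exactly the compatibility obstruction discussed before the theorem, and here it is what makes the constraint propagate trivially; hence $w\equiv0$ and $u$ is a Beltrami field. Uniqueness follows because any Beltrami field is real-analytic (taking the curl twice gives $\Delta u+\lambda^{2}u=0$, an elliptic equation with analytic coefficients) and divergence free, hence solves the same evolution Cauchy problem and therefore coincides with $u$ by Cauchy--Kovalevskaya uniqueness.

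Finally, to obtain a solution defined on all of $\mathbb R^{3}$ rather than on a slab around the plane, one replaces the classical theorem by the global Cauchy--Kovalevskaya theorem of Pong\'erard and Wagschal~\cite{PW}: the system above has constant (hence entire) coefficients, and when $F$ is entire the datum $\nabla_{\mathbb R^{2}}F$ extends holomorphically to $\mathbb C^{2}$, so their result yields a solution holomorphic on $\mathbb C^{3}$ whose restriction to $\mathbb R^{3}$ is the desired global Beltrami field; the identity $w\equiv0$ then persists on $\mathbb C^{3}$ by analytic continuation. I expect this last step to be the main obstacle: one must check that the hypotheses of the global Pong\'erard--Wagschal theorem are genuinely met by a first-order constant-coefficient system with an arbitrary entire Cauchy datum, without losing any growth control on $F$. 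A convenient alternative is to apply the global theorem instead to the scalar Helmholtz equation $\Delta u_i+\lambda^{2}u_i=0$ satisfied by each component, with Cauchy data $u_i|_{z=0}$ and $\partial_z u_i|_{z=0}$ (the latter read off from $\nabla_{\mathbb R^{2}}F$ via the evolution equations), and then reassemble the three scalar solutions into a curl eigenfield using the constraint-propagation argument above.
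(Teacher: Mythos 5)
Your proposal follows essentially the same route as the paper (and the detailed proof in \cite{CMP2} that it summarizes): impose the gradient compatibility condition, rewrite the Beltrami equation together with $\operatorname{div}u=0$ as a non-characteristic evolution system in $z$ solved by Cauchy--Kovalevskaya with the constraint propagating from the plane, and invoke the global Pong\'erard--Wagschal theorem \cite{PW} for entire data to get a solution on all of $\mathbb{R}^3$, with uniqueness via analyticity of Beltrami fields. The argument is correct as stated.
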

\section{Some open problems}\label{S.open}

We conclude with a list of open problems. The first one is inspired by Etnyre and Ghrist's observation that there is a correspondence between Reeb fields and Beltrami flows, cf. Section~\ref{S.Euler}. This connection allows one to show that the $n$-body problem of celestial mechanics can be interpreted as a steady Euler flow on some Riemannian manifold (which corresponds to positive energy levels)~\cite{josepdanieleva}. While the chaotic nature of several systems in celestial mechanics has been studied by various authors (see, for instance, \cite{BGG}), we are not aware of any research analyzing the computational power of the \( n \)-body problem from the perspective of Turing machines. This leads us to consider the following:


\begin{openproblem} 
Is the $n$-body problem in celestial mechanics Turing complete for some $n>2$ (and some values of masses)?
    
\end{openproblem}
 
The second problem we want to state concerns the connections between dynamical chaos and computational power. In \cite{entropypaper} we proved that a natural class of Turing machines (that we called ``regular''), which include most of known examples of universal Turing machines, have positive topological entropy; however, as explained in~\cite{entropypaper}, there are universal Turing machines with zero topological entropy. More refined versions of entropy in dynamical systems are given by 
entropy-type invariants of subexponential growth (e.g. the polynomial entropy~\cite{kushnirenko,jeanpierremarco,HLR}) that may be non-vanishing even for systems with zero topological entropy. For a recent comprehensive account on the subject see~\cite{Katok}. It is then natural to ask the following:

\begin{openproblem}
Given a universal Turing machine, is there some polynomial (or subexponential growth) entropy which is positive? 
\end{openproblem}

The constructions in this article are either Turing complete for a non-prescribed metric on a compact manifold or Turing complete for the fixed Euclidean metric on $\mathbb{R}^3$. The following question remains unanswered:
\begin{openproblem}
Does there exist a Turing complete stationary Euler flow on $(\mathbb S^3,g_{round})$ or $(\mathbb T^3,g_{flat})$?    \end{openproblem}

To finish the list of open problems, we want to emphasize that our constructions explained above yield Turing complete solutions to the stationary Euler equations, so ideal fluid flows. It is natural to ask if our techniques can be applied to construct stationary solutions to the Navier-Stokes equations that are Turing complete. Since it is impossible~\cite{BGH} to construct Turing-complete systems on compact spaces that are robust under perturbations, this means that adding viscosity to the system can destroy its computational power. This leads to the problem:  

\begin{openproblem}
Does there exist a Turing complete steady Navier-Stokes flow on some compact Riemannian $3$-manifold?    \end{openproblem}
\begin{remark}
    Analogously to the Euler equations it is possible to define Navier-Stokes on general Riemannian manifolds (where the dissipative term of the equations is modelled by the Hodge Laplacian associated to the Riemannian structure).
\end{remark}



\bibliographystyle{amsplain}

\end{document}